\def\R{\mathbb R}
\def\H{\mathcal H}
\def\T{\mathcal T}
\def\l{\lambda}
\newtheorem{thm}{Theorem}
\newtheorem{prop}[thm]{Proposition}
\newtheorem{cor}[thm]{Corollary}
\title{On a geometric extremum problem for convex cones}
\author{Oleg Mushkarov}
\address{Institute of Mathematics and Informatics, Bulgarian Academy of Sciences, Acad. G. Bonchev 8,
1113 Sofia, Bulgaria}
\email{muskarov@math.bas.bg}
\author{Nikolai Nikolov}
\address{Institute of Mathematics and Informatics, Bulgarian Academy of Sciences, Acad. G. Bonchev 8,
1113 Sofia, Bulgaria
\vspace{1mm}
\newline Faculty of Information Sciences, State University of Library Studies and Information
Technologies, Shipchenski prohod 69A, 1574 Sofia, Bulgaria}
\email{nik@math.bas.bg}
\thanks{The second named author was partially supported by the Bulgarian National Science Fund,
Ministry of Education and Science of Bulgaria under contract
KP-06-N82/6.} \subjclass[2020]{51M16, 52A38, 49K21}
\begin{document}

\keywords{Philon line, convex cone, simplex}

\maketitle

\vspace{-8mm}
\begin{abstract} We discuss the optimization problem for minimizing the $(n-1)$-volume
of the intersection of a convex cone $K$ in $\R^n$ with a
hyperplane through a given point, first considered in \cite{We}.
We give a geometric characterization of the stationary hyperplanes
for this problem when $K$ is a hyperangle which partially answers
a question posed in \cite{We}. Moreover, we study the location of
the set $S$ of points for which there is a stationary hyperplane
as well as the infimum of the $(n-1)$-volumes of cone segments of
$K$ cut off by hyperplanes through a given boundary point of $K$.
As a model example we study in detail the non-negative orthant 
of $\R^n$. In this case $S$ is its interior and we show that every point 
of $S$ lies in a unique stationary hyperplane, which we describe in
terms of the unique real root of an irrational equation.
\end{abstract}

\section{Introduction}

Let $K$ be a closed convex cone in $\R^n$ $(n\ge 2)$ with vertex
at the origin $O$ and non-empty interior $K^o.$ Assume that $K$ is
pointed, that is $K\cap(-K)=\{O\}.$ In \cite{We}, the following
optimization problem is consider:

$(\ast)$ \emph{Minimize the $(n-1)$-volume of the intersection of
$K$ with a hyperplane $\H\not\ni O$ through a given point $A.$}

Let $(\cdot,\cdot)$ be the standard inner product in $\R^n.$
Write $\H$ as $\H(b)=\{x:(b,x)=1\}$ ($b\neq 0$) and denote by
$V(b)$ the $(n-1)$-volume of $K(b)=K\cap\H(b).$ Then $(\ast)$
means to minimize $V(b)$ under the constraint $(b,a)=1$
($a=\overrightarrow{OA}$). Note that $V(b)$ is finite exactly when
$K(b)$ is compact, i.e. $b$ is an inner point of the dual cone
$K^*$ of $K.$ Recall that $K^*=\{y:(y,x)\ge 0 , \forall x\in K\}.$
Since $K^*\cap(-K)=\{O\},$ we may assume $A\not\in(-K)$ and we may
consider only vectors $b\in K^+:=(K^*)^o=\{y:(y,x)>0 , \forall x\in K\}$
($K^+\neq\varnothing$ if and only if $K$ is pointed).
Call the respective hyperplanes admissible.

The main Result 3.1 in \cite{We} states that $b$ is stationary for
$V$ under the constraint $(b,a)=1,$ i.e. $D_b(V)=\l a$
($\l$ is the Lagrange multiplier), if and only if
\begin{equation}\label{main}
\overrightarrow{AH}=n\overrightarrow{AG},
\end{equation}
where $H$ is the orthogonal projection of $O$ on $\H(b)$ and $G$
is the centroid of $K(b).$ We call $\H(b)$ a stationary hyperplane
for $A.$

This result extends the classical case $n=2,$ where the respective
stationary line is called a Philon line (for more details, see e.g.
\cite{CC} and \cite{We}).

Note that \eqref{main} implies that any admissible hyperplane $\H$
is stationary exactly for one point from $\H.$ On the other hand,
denote by $S$ the set of points $A\in\R^n$ for which there is a
stationary hyperplane through $A.$ Since $(\ast)$ has a solution
for any $A\in K^o,$ we have that $K^o\subset
S\subset\R^n\setminus(-K).$ In \cite{We}, a general question is
posed, namely, to describe the set $S$ and to characterize the
respective stationary hyperplanes. In Proposition \ref{geom4} we
recall the answer to this question for $n=2$ and note that the
situation changes when $n\geq 3$. We also discuss the infimum
problem related to $(\ast)$ for a boundary point $A$ of $K$
(Proposition \ref{geom5}).

A main purpose of this note is to show that if $K$ is the
non-negative orthant $(\R^+_0)^n,$ then $S=K^o$ and any point from
$S$ lies in a unique stationary hyperplane (which solves
$(\ast)$). Moreover, we describe this hyperplane in terms of the
unique real root of an irrational equation (Proposition \ref{df}).
Applying this result (for $n=2$), we also find the length of the
shortest segment cut from this orthant $K$ by a line through a
given point $A\in K^o$ (Proposition \ref{dfl}).

\section{Geometric characterization of stationary hyperplanes}

In this section we give a geometric characterization of the
stationary hyperplanes when $K$ is a hyperangle, i.e. a closed
convex cone in $\R^n$ spanned by $n$ linearly independent vectors
$e_1, e_2,\dots, e_n$. Moreover, we study the location of the set
$S$ defined above as well as the infimum of the $(n-1)$-volumes of
cone segments of $K$ cut off by admissible hyperplanes through a
given boundary point of $K$.

Let $K$ be a hyperangle in $\R^n$ and $\H=\H(b)$ a hyperplane such
that $b\in K^+.$ This means that $\H$ intersects any ray
$Oe_i^\to,$ say at $A_i.$ Let $C$ be the circumcenter of the
$(n-1)$-simplex $\mathcal A =A_1A_2\dots A_n.$ For $n\ge 3,$
denote by $M$ the Monge point of $\mathcal A ,$ i.e. the common
point of the hyperplanes through the centroids of the
$(n-3)$-dimensional faces of $\mathcal A$ which are orthogonal to
the opposite edges of $\mathcal A$ (see e.g. \cite{EHM}). For
$A\in\H$ define $A'$ by
\begin{equation}\label{newp}
\overrightarrow{OA'}=
\frac{n-1}{2}\overrightarrow{OA}.
\end{equation}

\begin{prop}\label{geom1} For $n\ge 3,$
any two of the following three conditions
imply the third one:

(i) $\H$ is a stationary hyperplane for $A;$

(ii) $A'A_1=A'A_2=\dots=A'A_n;$

(iii) $H=M.$
\end{prop}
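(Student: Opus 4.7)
My plan is to replace each of the three conditions by an equivalent linear relation among the four points $H,G,C,A$ (all regarded as position vectors from $O$), after which the three-way implication becomes a transparent piece of linear algebra.

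Unwinding (i) gives $(n-1)(A-H)=n(G-H)$, i.e.\ $\overrightarrow{HA}=\tfrac{n}{n-1}\overrightarrow{HG}$. For (ii), observe that $A'=\tfrac{n-1}{2}A$ is in general not in $\H$; let $P$ be its orthogonal projection onto $\H$. By Pythagoras in the right triangles $A'PA_i$ (since $A_i\in\H$), condition (ii) is equivalent to $|PA_1|=\dots=|PA_n|$, i.e.\ to $P=C$. A short computation using $H=b/|b|^2$ and $(b,A)=1$ shows $P=H+\tfrac{n-1}{2}(A-H)$, so (ii) becomes $(n-1)(A-H)=2(C-H)$.

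The crux is to show that (iii) is equivalent to the third linear relation $n(G-H)=2(C-H)$, for once this is in place the three identities $(n-1)(A-H)=n(G-H)$, $(n-1)(A-H)=2(C-H)$ and $n(G-H)=2(C-H)$ visibly have the property that any two imply the third. What is needed is therefore the explicit formula
\[
(n-2)M=nG-2C.
\]
To derive it, I would write the centroid of the $(n-3)$-face opposite the edge $A_kA_l$ as $G_{kl}=(nG-A_k-A_l)/(n-2)$ and impose the defining perpendicularity $(M-G_{kl},A_l-A_k)=0$. Using $|A_l|^2-|A_k|^2=2(C,A_l-A_k)$, which follows from $C$ being equidistant from the $A_i$, this collapses to $((n-2)M-(nG-2C),A_l-A_k)=0$ for every $k\ne l$. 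Since $M$ and $(nG-2C)/(n-2)$ both lie in $\H$ (the latter being an affine combination of $G$ and $C$) and their difference is orthogonal to the direction space $\H-\H$ spanned by the edges $A_l-A_k$, they must coincide.

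The only genuine obstacle I foresee is this Monge-point identity; it is the one step where the detailed definition of $M$ actually enters, and it may well be extractable from the reference \cite{EHM}. Everything else is bookkeeping with the three reformulated identities.
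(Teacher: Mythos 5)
Your proof is correct and follows essentially the same route as the paper's: your projection $P$ is the paper's $A''$, your identity $(n-1)(A-H)=2(C-H)$ is its equation \eqref{ort} combined with the observation that (ii) means $A''=C$, and your relation $(n-2)M=nG-2C$ is exactly its equation \eqref{cmg}, $\overrightarrow{CG}=\tfrac{n-2}{n}\overrightarrow{CM}$, which the paper simply cites from \cite{EHM}. The only thing you add is a self-contained (and sound) derivation of that Monge-point identity, plus a more explicit statement of the final linear algebra that the paper leaves to the reader.
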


\begin{proof} Let $A''$ be the orthogonal projection of $A'$ on $\H$.
Then, by \eqref{newp},
\begin{equation}\label{ort}
\overrightarrow{A''A}=\frac{n-3}{2}\overrightarrow{AH}.
\end{equation}
On the other hand, it is a classical result (see e.g. \cite{EHM})
that
\begin{equation}\label{cmg}
\overrightarrow{CG}=\frac{n-2}{n}\overrightarrow{CM}.
\end{equation}
The proposition follows by \eqref{main}, \eqref{ort}, \eqref{cmg},
and the fact that (ii) is equivalent to $A''=C$.
\end{proof}

\noindent{\bf Remarks.} (i) If $n=2,$ (iii) is missing, i.e. $(i)\Leftrightarrow(ii);$
in other words, $l$ is a Philon line for $A$ exactly when $G$
is the midpoint of $[AH].$

(ii) Recall that a $k$-simplex $\Delta$ ($k\ge 2$) is called
orthocentric if the altitudes from the vertices to the opposite
facets have a common point, called orthocenter. It turns out that
this is equivalent to orthogonality of any edge to the respective
$(k-2)$-dimensional face. Note that the orthocenter of $\Delta$
coincides with its Monge point (see e.g. \cite{EHM}).

Assume now that $OA_1A_2\dots A_n$ is an orthocentric $n$-simplex
($n\ge 3$). Then $A_1A_2\dots A_n$ is an orthocentric
$(n-1)$-simplex and $H=M.$ Hence in this case
$(i)\Leftrightarrow(ii),$ too.

\begin{prop}\label{geom2} Let $K$ be a hyperangle.

(i) If $K^*\subset K,$ then $S=K^o$.

(ii) If an extreme ray of $K$ forms acute angles with the others, then
there exists a point $A\in\partial K$ with a unique hyperplane
solving problem $(\ast);$ in particular, $K^o\subsetneq S.$
\end{prop}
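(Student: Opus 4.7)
Both parts rest on a single coordinate identity. Writing $b\in K^+$ in the dual basis $e_1^*,\dots,e_n^*$ (so $(e_i^*,e_j)=\delta_{ij}$) as $b=\sum_i\beta_ie_i^*$, one has $\beta_i=(b,e_i)>0$. Set $D_{ij}=(e_i^*,e_j^*)$; then $D=G^{-1}$ where $G_{ij}=(e_i,e_j)$. The key algebraic fact is $e_j^*=\sum_kD_{jk}e_k$, so $K^*\subset K$ is equivalent to the entrywise non-negativity $D\ge0$. Starting from $A(b):=(nG(b)-H(b))/(n-1)$ provided by \eqref{main} and using $(e_k^*,G(b))=1/(n\beta_k)$ together with $(e_k^*,H(b))=(e_k^*,b)/\|b\|^2=(D\beta)_k/\|b\|^2$, a short computation gives
\[
(e_k^*,A(b))\;=\;\frac{1}{n-1}\!\left(\frac{1}{\beta_k}-\frac{(D\beta)_k}{\|b\|^2}\right),\qquad k=1,\dots,n.
\]
Since $A\in K^o$ iff the right-hand side is positive for every $k$, all sign information is encoded here.

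For (i), $K^o\subset S$ is already known (the constraint set is compact and $V$ is coercive at $\partial K^+$); I need $S\subset K^o$. Take $A=A(b)\in S$ with $b\in K^+$. Under $D\ge0$, each $(D\beta)_j\ge D_{jj}\beta_j>0$, so every summand $\beta_j(D\beta)_j$ of $\|b\|^2=\sum_j\beta_j(D\beta)_j$ is strictly positive. Dropping the $k$-th summand leaves a positive quantity, so $\|b\|^2>\beta_k(D\beta)_k$ and the displayed identity gives $(e_k^*,A)>0$ for every $k$, i.e.\ $A\in K^o$.

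For (ii), suppose (WLOG) $(e_n,e_j)>0$ for every $j<n$. The plan is to exhibit the required boundary point explicitly by taking $b:=e_n$. Then $\beta_i=(e_n,e_i)>0$ for each $i$, so $b\in K^+$, and $\beta$ is the $n$-th column of $G$; since $DG=I$, $D\beta$ is the $n$-th standard basis vector of $\R^n$, i.e.\ $(D\beta)_k=\delta_{kn}$. Substituting, $(e_n^*,A(b))=[1/\|e_n\|^2-1/\|e_n\|^2]/(n-1)=0$ (using $\beta_n=\|e_n\|^2=\|b\|^2$), while $(e_k^*,A(b))=1/((n-1)(e_n,e_k))>0$ for $k<n$. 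Hence
\[
A(b)\;=\;\sum_{j<n}\frac{e_j}{(n-1)(e_n,e_j)}\;\in\;\partial K,
\]
lying in the relative interior of the face $\{x:(e_n^*,x)=0\}$. This already gives $K^o\subsetneq S$.

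It remains to check that $\mathcal H(e_n)$ is the unique minimizer. The plan is in two steps. First, attainment: since $V\to\infty$ whenever $\beta_i\to0$ for some $i<n$, the only possible escape of the infimum is $\beta_n\to\infty$, along which $V^2\to C^2D_{nn}/\prod_{i<n}\beta_i^2$; the AM--GM minimum of this limit under the constraint $\sum\alpha_i\beta_i=1$ (with $\alpha_k=1/((n-1)(e_n,e_k))$) equals $V(e_n)^2\cdot\|e_n\|^2\|e_n^*\|^2$, which is strictly larger than $V(e_n)^2$ by strict Cauchy--Schwarz: $(e_n,e_n^*)=1$ and the equality case $e_n\parallel e_n^*$ is ruled out because acuteness prevents $e_n\perp\mathrm{span}(e_1,\dots,e_{n-1})$. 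Second, uniqueness of the interior minimizer: I would argue that the Lagrange system, combined with the specific form of $A$, forces any critical $\beta$ to satisfy $D\beta=$(constant)$\cdot e_n^{(n)}$, which by invertibility of $D$ determines $\beta$ up to scale, and the constraint then pins down $b=e_n$. This second step is the main obstacle: a direct back-substitution of the stationary condition into the Lagrange equations collapses to a tautology, so uniqueness must be teased out by combining the positivity $\beta_i>0$ with the homogeneity $A(tb)=A(b)/t$, or equivalently by showing that the map $b\mapsto A(b)$ is injective on $K^+$ in a neighbourhood of the ray $\{te_n:t>0\}$.
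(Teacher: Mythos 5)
Part (i) of your argument is correct and is in substance the paper's own proof written in dual--basis coordinates rather than barycentric ones: your inequality $\|b\|^2>\beta_k(D\beta)_k$ is exactly the statement that the $k$-th barycentric coordinate $h_k=\beta_k(D\beta)_k/\|b\|^2$ of the foot of the perpendicular satisfies $h_k<1$, which the paper derives from $h_i>0$ for all $i$ (the translation of $K^*\subset K$) together with $\sum_i h_i=1$; combined with $h_k+(n-1)a_k=1$ this gives $a_k>0$. Your identification of the boundary point in (ii) also matches the paper's: $A=\sum_{j<n}e_j/((n-1)(e_n,e_j))$ is precisely the centroid of the facet $A_2\dots A_n$ of the cross-section by $\H(e_n)$, and your computation that $\H(e_n)$ is stationary for it is sound, so $K^o\subsetneq S$ is established.

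The genuine gap is the claim that $\H(e_n)$ is the \emph{unique hyperplane solving} $(\ast)$, which you explicitly leave unfinished. Your proposed route (show that every critical $\beta$ of the Lagrange system satisfies $D\beta=c\,e^{(n)}$, or establish injectivity of $b\mapsto A(b)$ ``in a neighbourhood of the ray $\{te_n\}$'') cannot work as stated: local injectivity near that ray would not exclude stationary hyperplanes far from $\H(e_n)$, and the paper's own example following Proposition \ref{geom4} (a trihedral angle with three stationary planes through a single interior point) shows that global uniqueness of stationary hyperplanes genuinely fails for nearby data, so it cannot follow from soft considerations. The paper sidesteps the Lagrange analysis entirely: for any admissible $\H\ni A$ it orthogonally projects the cross-section $\mathcal A$ onto $\H(e_1)$ (in your notation $\H(e_n)$), which does not increase $(n-1)$-volume and fixes $A_1$; the image $\mathcal A'$ is then a cone segment of the cone $K_1$ with apex $A_1$ cut by an $(n-2)$-plane through $A$, and by the cited necessary condition from \cite{We} a minimal such segment must have $A$ as centroid of its cross-section. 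Since $A_2\dots A_n$ is the unique cross-section of $K_1$ with centroid $A$, one gets $V_{n-1}(\mathcal A)\ge V_{n-1}(\mathcal A')\ge V_{n-1}(\mathcal A_1)$ with equality throughout only when $\H=\H(e_1)$, which delivers global minimality and uniqueness simultaneously and also makes your separate attainment step unnecessary. You would need either to adopt this projection argument or to supply a complete proof that $b=e_n$ is the only solution of $A(b)=A$ in $K^+$; as written, (ii) proves only $K^o\subsetneq S$, not the uniqueness assertion.
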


\begin{proof} Let $\H$ be an admissible hyperplane and $H$ be the orthogonal
projection of $O$ on $\H.$ For a point $X\in
H,$ denote by $(x_1,x_2,\dots,x_n)$ its barycentric coordinates
w.r.t. to the $(n-1)$-simplex $\mathcal A$ defined at the
beginning of this section.

(i) The condition $K^*\subset K$ is equivalent to $K^+\subset K^o$
which means that $H\in\mathcal A^o$ for any admissible
hyperplane $\H,$ i.e. $h_i>0$ for any $1\le i\le n.$

Let now $\H$ be a stationary hyperplane for $A.$ Then \eqref{main}
reads as
\begin{equation}\label{bar}
h_i+(n-1)a_i=1,\quad 1\le i\le n.
\end{equation}

Hence $a_i<1/(n-1)$ and then
$$a_k=1-\sum_{i\neq k}^na_i>1-(n-1).\frac{1}{n-1}=0.$$
This shows $A\in K^o$ and thus $S\subset K^o.$

(ii) We may assume $(e_1,e_i)>0$ for any $2\le i\le n.$ Denote by
$\mathcal A_1=A_1A_2\dots A_n$ the respective $(n-1)$-simplex to the hyperplane
$\H_1=\H(e_1).$ Consider the point $A\in\partial K$ with barycentric coordinates
$(0,\frac{1}{n-1},\dots,\frac{1}{n-1})$ w.r.t. $\mathcal A_1.$

Let $\H$ be an admissible hyperplane for $A$ and $\mathcal
A'=A_1'A_2'\dots A_n'$ be the orthogonal projection of the
respective $(n-1)$-simplex $\mathcal A$ on $H_1.$ Then $A_1'=A_1$ and
$A_i'$ lies on the ray $A_1A_i,$ $2\le i\le n.$ Denote by
$K_1$ the cone spanned by these $n-1$ rays.

Recall now (see e.g. \cite[Section 4, item (iv)]{We}) that if the
$(n-1)$-volume of the cone segment of $K_1$ cut off by an
$(n-2)$-plane through $A$ is minimal, then $A$ is the centroid of
the respective $(n-2)$-simplex. Note that $A_2\dots A_n$ is the
unique $(n-2)$-simplex with this property. So
$$V_{n-1}(\mathcal A)\ge V_{n-1}(\mathcal A')\ge V_{n-1}(\mathcal A_1)$$
and (ii) easily follows.
\end{proof}

It is easy to see that $K\subset K^*$ (resp. $K\subset K^+$) if
and only if the angles between the extreme rays of $K$ are non-obtuse
(resp. acute). Hence the condition $K\subset K^+$ implies that in
(ii) of Proposition \ref{geom2}. On the other hand, since
$(K^*)^*=K,$ we have that $K^*\subset K$ if and only if the angles
between the inward normals to the facets of $K$ are non-obtuse. In particular,
the non-negative orthant is self-dual, i.e. $K=K^*.$ Note also that in general
$K^o\cap K^+\neq\varnothing$ which follows by the hyperplane separation theorem.

Next, we claim that if $K$ is a closed trihedral angle in $\R^3$ with face angles
$\alpha\le\beta\le\gamma,$ then the law of cosines for $K$ shows that its dihedral
angles are non-acute, i.e. $K^*\subset K,$ if and only if
$\beta\ge\pi/2$ and $\cos\alpha\le\cos\beta\cos\gamma.$ In particular,
$\alpha\ge\pi/2$ implies $K^*\subset K.$ This observation remains true in any
dimension.

\begin{prop}\label{geom3} If the angles between the extreme rays of a hyperangle $K$
in $\R^n$ are non-acute, then $K^*\subset K.$
\end{prop}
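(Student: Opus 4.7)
The plan is to reformulate the inclusion $K^*\subset K$ as an entrywise sign statement about the Gram matrix $G=\bigl((e_i,e_j)\bigr)$ of the generators of $K$. Since $e_1,\dots,e_n$ are linearly independent, they admit a dual basis $f_1,\dots,f_n$ characterized by $(f_i,e_j)=\delta_{ij}$. Expanding an arbitrary vector $y$ in this dual basis and evaluating the inner products $(y,e_k)$ shows that $K^*$ is precisely the non-negative cone spanned by $f_1,\dots,f_n$, so $K^*\subset K$ amounts to saying that each $f_j$ has non-negative coordinates in the basis $(e_i)$.

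Writing $f_j=\sum_i\alpha_{ij}e_i$ and pairing with $e_k$, the duality relations $(f_j,e_k)=\delta_{jk}$ become $GA=I$ with $A=(\alpha_{ij})$, hence $A=G^{-1}$; the proposition is thus equivalent to the statement that $G^{-1}$ has non-negative entries. Under the hypothesis $G_{ij}=(e_i,e_j)\le 0$ for $i\ne j$, together with the symmetry and positive-definiteness of $G$ coming from the linear independence of the $e_i$, the matrix $G$ is a symmetric Stieltjes (hence $M$-) matrix, and the non-negativity of its inverse is a classical fact.

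For a self-contained argument I would proceed by induction on $n$, the case $n=1$ being trivial. For the inductive step, decompose
$$G=\begin{pmatrix} G' & -v \\ -v^T & c\end{pmatrix},$$
where $G'$ is the principal $(n-1)\times(n-1)$ submatrix (Stieltjes and invertible with $G'^{-1}\ge 0$ by the inductive hypothesis), $v\ge 0$ entrywise, and $c>0$. Positive-definiteness of $G$ forces the Schur complement $s=c-v^T G'^{-1} v$ to be positive, and the block-inverse formula
$$G^{-1}=\begin{pmatrix} G'^{-1}+s^{-1}G'^{-1}vv^T G'^{-1} & s^{-1}G'^{-1}v\\ s^{-1}v^T G'^{-1} & s^{-1}\end{pmatrix}$$
then exhibits each block as a non-negative quantity, since $G'^{-1}vv^T G'^{-1}=(G'^{-1}v)(G'^{-1}v)^T$ by symmetry of $G'^{-1}$. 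The only place where the hypotheses of the proposition have to be carefully tracked is in certifying the positivity of the Schur complement at each inductive step; apart from that and the standard $M$-matrix manipulations, no serious obstacle is anticipated.
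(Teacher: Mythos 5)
Your proof is correct, but it takes a genuinely different route from the paper's. The paper argues by contradiction and connectivity: if $K^*\not\subset K$, then the open convex (hence connected) cone $K^+$ meets both $K^o$ and the complement of $K$, so it meets $\partial K$ at some point $A$; writing $a=\sum_{i\ge 2}\alpha_i e_i$ with $\alpha_i\ge 0$ on a facet, the hypothesis $(e_i,e_1)\le 0$ forces $(a,e_1)\le 0$, contradicting $a\in K^+$. You instead identify $K^*$ with the cone generated by the dual basis $f_1,\dots,f_n$ and reduce the claim to entrywise non-negativity of $G^{-1}$ for the Gram matrix $G$, i.e.\ to the classical inverse-positivity of Stieltjes matrices, which you establish by Schur-complement induction. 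All steps check out: the block-inverse formula is applied with the correct signs, $G'$ inherits positive definiteness and non-positive off-diagonal entries as a principal submatrix, and the positivity of the Schur complement $s$ is automatic from positive definiteness of $G$ (it comes from linear independence of the $e_i$, not from the sign hypothesis, so the caveat in your last sentence is not actually an issue). Your route is longer but self-contained and yields the sharper equivalence $K^*\subset K\Leftrightarrow G^{-1}\ge 0$ entrywise; the paper's route is shorter but leans on the earlier separation-theorem remark that $K^o\cap K^+\neq\varnothing$ and a topological step.
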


\begin{proof} Suppose that $K^*\not\subset K.$ Then,
since $K\not\subset K^+$ and $K\cap K^+\neq\varnothing,$ there exists a
point $A\in K^+\cap\partial K.$ We may assume that $a=\sum_{i=2}^n\alpha_i e_i,$
where $\alpha_i\ge 0.$ Hence $0<(a,e_1)=\sum_{i=2}^n\alpha_i(e_i,e_1)\le 0,$ a contradiction.
\end{proof}

Now we will describe the set $S$ if $n=2.$ Set
$\alpha=\angle(e_1,e_2).$ Then $K^*\subset K$ if
$\alpha\ge 90^\circ$ and $K\subset K^+$ if $\alpha<90^\circ.$ So
Proposition \ref{geom1} implies that $S=K^o$ in the first case and
$K^o\subsetneq S$ in the second one. In fact, we know much more
(see e.g. \cite{Bo} and \cite{CC}).

If $\alpha<90^\circ,$ set
$$\theta=\arctan\left(\frac{1+\sin^2\alpha/2}{1+\cos^2\alpha/2}\right)^{3/2}$$
and consider the closed angle $T$ with vertex $O,$ the same
bisector as $K,$ and aperture $2\theta.$ Then $K\subsetneq T.$

\begin{prop}\label{geom4} (i) If $\alpha\ge 90^\circ,$ then $S=K^0$ and for any point from
$K^0$ there exists one stationary line (global minima point).

(ii) If $\alpha<90,$ then $S=T\setminus\{O\}.$ Moreover,

-- for $A\in K$ there exists one stationary line (global minima point);

-- for $A\in T^o\setminus K$ there exist two stationary lines (local minima and maxima points);

-- for $A\in\partial T\setminus\{O\}$ there exists one stationary line (saddle point).
\end{prop}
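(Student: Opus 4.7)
\emph{Plan.} The plan is to reduce the Philon condition (Remark (i)) for a line through $A$ to a cubic equation in one scalar, and to extract both $S$ and the classification of critical points from its discriminant. Assume $e_1, e_2$ are unit and write $A = u e_1 + v e_2$ in oblique coordinates; any admissible line through $A$ meets the rays at $A_1 = pe_1$, $A_2 = qe_2$ with $u/p + v/q = 1$, whence $q = vp/(p-u)$. Substituting into $(A_1 + A_2 - A, A_2 - A_1) = 0$ and setting $c = \cos\alpha$, $r = u/v$, $\tau = (p-u)/v$ will collapse the stationarity condition to
\[F(\tau) := \tau^3 - c\tau^2 + rc\tau - r = 0,\]
so that stationary lines through $A$ correspond bijectively to real roots of $F$ in an interval $I(r)$ determined by the signs of $u, v$: $(0,\infty)$ for $u,v>0$, $(-r,\infty)$ for $u<0<v$, and $(0,-r)$ for $u>0>v$.

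For part (i), $c \le 0$ and Proposition \ref{geom2}(i) already gives $S \subseteq K^o$. Solving $F = 0$ for $r$ yields $r(\tau) = \tau^2(c-\tau)/(c\tau - 1)$, which (by direct differentiation) is a strictly monotone bijection of $(0,\infty)$ onto $(0,\infty)$; hence $F$ has a unique positive root for every $A \in K^o$, giving a unique stationary line. Since $V \to +\infty$ at the boundary of the admissible parameter interval, this line is the global minimum by compactness.

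For part (ii), let $0 < c < 1$. For $A \in K$ (so $r \ge 0$), the same $r(\tau)$ is monotone on the subinterval $(c, 1/c)$ where $r > 0$, again giving a unique admissible root and hence a unique global minimum. For $A \notin K \cup (-K)$ the symmetry $u \leftrightarrow v$ reduces us to the case $u < 0 < v$ (so $r < 0$). The critical points of $r(\tau)$---roots of $2c\tau^2 - (c^2+3)\tau + 2c = 0$, which by Vieta are positive reciprocals $\tau_*$, $1/\tau_*$ with $\tau_* \in (0,c)$ and $1/\tau_* \in (1/c,\infty)$---correspond to a local minimum $r_+$ and a local maximum $r_-$ of $r(\tau)$ with $r_+ r_- = 1$. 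A sign/interval count in $I(r) = (-r,\infty)$ then shows that $F$ has two admissible roots for $r \in (r_+, 0)$, one (double) admissible root for $r = r_+$, and none for $r < r_+$. Up to the overall factor $r$, the discriminant of $F$ is the quadratic
\[4c^3 r^2 - (c^4 + 18c^2 - 27) r + 4c^3 = 0,\]
whose root $r_+$ translates, via the identity $r = \sin(\alpha/2+\phi)/\sin(\alpha/2-\phi)$ (where $\phi$ is the angle from the bisector of $K$ to $\vec{OA}$) and the half-angle formulas $3-c = 2(1+\sin^2(\alpha/2))$, $3+c = 2(1+\cos^2(\alpha/2))$, into the stated formula $\tan\theta = ((1+\sin^2(\alpha/2))/(1+\cos^2(\alpha/2)))^{3/2}$. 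Hence $S = T \setminus \{O\}$.

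The nature of each stationary point then comes from the boundary behaviour of $V$: on $K$ the unique root of $F$ gives the global minimum; on $T^o\setminus K$ the two simple admissible roots, since $V\to+\infty$ at both ends of the admissible parameter interval, are necessarily a local minimum and a local maximum alternating; on $\partial T\setminus\{O\}$ they coalesce into a double root of $F$, giving $V' = V'' = 0$ with generically $V'''\ne 0$, i.e., a saddle (inflection) point. The main obstacle will be the sign and interval bookkeeping---correctly identifying $I(r)$ and counting admissible roots in each regime of $(u,v)$---together with the algebraic simplification from the discriminant quadratic to the half-angle formula for $\theta$.
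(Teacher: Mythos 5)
The paper itself offers no proof of Proposition \ref{geom4}: it is quoted from the literature (Bottema, Coxeter--van de Craats), so there is nothing internal to compare your argument against. Your self-contained route is sound in its main lines: the reduction of the Philon condition $H=A_1+A_2-A$, $\bigl(A_1+A_2-A,\,A_2-A_1\bigr)=0$ to the cubic $F(\tau)=\tau^3-c\tau^2+rc\tau-r$ is correct (I checked the algebra), the admissible intervals $I(r)$ are right, the monotonicity of $r(\tau)=\tau^2(c-\tau)/(c\tau-1)$ on $(0,\infty)$ for $c\le 0$ and on $(c,1/c)$ for $0<c<1$ gives parts (i) and the $A\in K^o$ case of (ii), the discriminant of $F$ is indeed $-r\bigl[4c^3r^2-(c^4+18c^2-27)r+4c^3\bigr]$, and its root $r_+$ does reproduce the stated $\theta$ via $3\mp c=2(1+\sin^2(\alpha/2))$, $2(1+\cos^2(\alpha/2))$ (this checks out numerically, e.g.\ at $\alpha=60^\circ$).

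Two points, however, are genuine gaps rather than bookkeeping. First, the claim that ``$V\to+\infty$ at both ends of the admissible parameter interval'' is false for $A\in T^o\setminus K$: as $\tau\to(-r)^+$ both $A_1$ and $A_2$ tend to $O$ and $V\to 0$ --- this is exactly why the paper notes $m_A=0$ for $A\notin K\cup(-K)$. The classification survives (with $V\to 0$ at one end and $V\to+\infty$ at the other, two simple sign changes of $V'$ force the pattern max/min), but as written the justification is wrong. Second, the count ``none for $r<r_+$'' does not follow from the discriminant alone: since $r_+r_-=1$ with $r_+\in[-1,0)$ and $r_-\le-1$, for $r\le r_-$ the cubic again has two positive roots (now in $(1/c,\infty)$), and one must verify that these violate the constraint $\tau>-r$ (they do: for $r\to-\infty$ the roots grow like $\sqrt{c|r|}\ll -r$, and $F(-r)=-r(r^2+2cr+1)>0$ plus continuity locks in the ordering); omitting this check would wrongly place points near $\partial(-K)$ in $S$. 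Finally, the case $A\in\partial K\setminus\{O\}$, where $v=0$ and your parametrization degenerates, needs a short separate argument (there $A_1=A$ is forced and one minimizes $|AA_2|$ directly).
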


It is noted in \cite[Section 4 (ii)]{We} that the situation changes when $n\geq 3.$
For example, if $K$ is the closed trihedral angle spanned by the vectors
$e_1=(1,-1,-0.2), e_2=(1,1,-0.2),$ and $e_3=(1,0,0.01)$ ($K\subset K^+$),
then three stationary planes exist for the point $A=(1,0,0)\in K^o$
(two local minima points and one saddle point).
\smallskip

Finally, let us mention again that if $K$ is a closed convex cone
in $\R^n$ with vertex at the origin $O,$ then the problem $(\ast)$
makes sense if $K$ is pointed, $K^o\neq\varnothing,$ and
$A\not\in(-K).$ It has solution if $A\in K^o$ and it has no
solution if $A\not\in K\cup(-K),$ since in this case the infimum
$m_A$ of the $(n-1)$-volumes of cone segments of $K$ cut off by
admissible hyperplanes through $A$ is equal to $0.$ So, it is
natural to ask what happens if $A\in\partial K.$ The answer is given by the following
proposition which is more or less intuitively clear.

\begin{prop}\label{geom5} Let $A\in\partial K\setminus\{O\}.$

(i) If there is no hyperplane
$\T\ni O$ such that $A$ is a relatively inner point for
$K'=\partial K\cap \T,$ then $m_A=0.$

(ii) Otherwise, such a $\mathcal{T}$ is unique and $m_A>0.$ If $m_A$ is not attained, then
$m_A$ is equal to the minimal $(n-1)$-volume of cone segment $K''$ of $K'$ cut off by an
$(n-2)$-plane $\T'\subset\T$ through $A.$ Moreover,
the relation \eqref{main} remains true in this degenerate case.
\end{prop}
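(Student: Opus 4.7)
The plan is to reinterpret the hypothesis of the dichotomy. The condition ``$A$ is a relatively inner point of $K'=\partial K\cap\T$'' means that $\partial K\cap\T$ has non-empty interior in $\T$ and $A$ lies in that interior; by convexity this forces $\T$ to be the supporting hyperplane of a facet $F$ of $K$ whose relative interior contains $A$. Such a facet is uniquely determined by $A$, so $\T=\mathrm{aff}(F)$ is automatically unique. Consequently (i) and (ii) correspond respectively to $\dim F_A\le n-2$ and $\dim F_A=n-1$, where $F_A$ denotes the smallest face of $K$ whose relative interior contains $A$.

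Both parts share a common degeneration argument. I would take $\nu\in K^*$ in the relative interior of the normal cone $N_{F_A}$ of $F_A$ (that is, $\nu\in K^*$ with $(\nu,x)=0$ for all $x\in F_A$, chosen ``extreme'' in the sense that $K\cap\nu^\perp=F_A$). In particular $(\nu,a)=0$. Then decompose every admissible $b$ as $b=b'+\mu\nu$ with $b'\perp\nu$; the constraint becomes $(b',a)=1$. Writing each $x\in K$ as $(x_F,s)$ with $x_F$ its orthogonal projection on $\T_\nu=\nu^\perp$ and $s=(x,\nu)/|\nu|^2\ge 0$, the slice equation $(b,x)=1$ reads $(b',x_F)+\mu|\nu|^2 s=1$; thus $S_\mu=K\cap\H(b'+\mu\nu)$ is the graph of the height function $h(x_F)=(1-(b',x_F))/(\mu|\nu|^2)$ over a base that converges, as $\mu\to\infty$, to the ``pyramid'' $K''=\{x_F\in F_A:(b',x_F)\le 1\}\subset\T_\nu$. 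Since $\|\nabla h\|=O(1/\mu)$, the graph-area formula yields
\[
\lim_{\mu\to\infty}V(b'+\mu\nu)=V_{n-1}(K'').
\]

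In (i), $\dim F_A\le n-2$ forces $K''\subset F_A$ to have $(n-1)$-volume zero, so $V(b'+\mu\nu)\to 0$ along this sequence and hence $m_A=0$. In (ii), $\dim F_A=n-1$ so $K''$ is an $(n-1)$-dimensional pyramid with strictly positive $(n-1)$-volume; combined with the fact that $V\to+\infty$ as $b$ approaches any other portion of $\partial K^+$ (the slice becomes unbounded) and that $V$ is continuous and positive on the interior of the parameter space, this forces $m_A>0$. If $m_A$ is not attained at any finite admissible $b$, the only non-compact escape of a minimising sequence is $\mu\to\infty$, and the limit formula yields $m_A=\inf_{b'}V_{n-1}(K''(b'))$---precisely the stated infimum of $(n-1)$-volumes of cone segments of $K'$ cut off by $(n-2)$-planes through $A$.

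Finally, \eqref{main} in the degenerate case is inherited from the limit: the foot $H=b/|b|^2$ of the perpendicular from $O$ to $\H(b)$ tends to $O$ (since $\H(b)\to\T\ni O$), so \eqref{main} becomes $-\overrightarrow{OA}=n\overrightarrow{AG}$, i.e.\ $G=\tfrac{n-1}{n}A$. Since the centroid of an $(n-1)$-dimensional pyramid with apex $O$ and base $D$ equals $\tfrac{n-1}{n}G_D$, this is equivalent to $G_D=A$, which is precisely the Euler--Lagrange (stationarity) condition for minimising $V_{n-1}(K''(b'))$ over $(n-2)$-planes $\T'\ni A$ inside $\T$. The main technical obstacle I anticipate is the uniform justification of the volume limit in $\mu$: one must control the $O(1/\mu)$ deformation of the graph's base that comes from the remaining facet inequalities defining $K$, in order to ensure the convergence is strong enough to exchange $\lim$ and $\inf$ and thereby also to rigorously establish the positive lower bound in (ii).
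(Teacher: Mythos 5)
Your overall strategy coincides with the paper's: degenerate admissible hyperplanes so that they flatten onto $\T$, identify the limit of the $(n-1)$-volumes with $V_{n-1}(K'')$, and verify \eqref{main} with $H=O$ by combining the cited fact that the optimal base has centroid $A$ with the observation that the centroid of the pyramid $K''$ with apex $O$ is $\frac{n-1}{n}$ of the way to the base centroid. Your parametrization $b=b'+\mu\nu$ and the face-theoretic reformulation of the dichotomy are a clean equivalent of the paper's construction (points approaching $O$ on extreme rays outside $\T'$, combined with a fixed $\T'$), and your treatment of part (i) and of the relation \eqref{main} is essentially the paper's.

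The gap is exactly where you flag it, and it is the crux of part (ii): the inequality $m_A\ge\min_{\T'}V_{n-1}(K'')$ when $m_A$ is not attained. Two things are missing. First, the assertion that ``the only non-compact escape of a minimising sequence is $\mu\to\infty$'' is the paper's Claim that $\T_i\to\T$; it is proved by normalizing $c_i=b_i/\|b_i\|$, writing $a=\sum\alpha_k e_k$ with $\alpha_k>0$ and $e_k\in K'$, and deducing $(c_i,e_k)\to 0$ from $(c_i,e_k)>0$ and $(c_i,a)\to 0$; you assert this but do not prove it. Second, and more importantly, your limit formula gives the value of $V$ only along a sequence with \emph{fixed} $b'$; to compare $V(b_i'+\mu_i\nu)$ with $\inf_{b'}V_{n-1}(K''(b'))$ when $b_i'$ varies along the minimising sequence you need a uniform lower bound that the graph-area argument does not supply (note also that the base of the graph need not contain $K''(b_i')$, since $\nu$ need not point into $K$ from points of $F_A$, so even the pointwise inequality is unclear). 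The paper closes precisely this gap with a monotonicity device that avoids any exchange of limits: project $K\cap\T_i$ orthogonally onto $\T$ to get $\Pi_i$, observe $V_{n-1}(K\cap\T_i)\ge V_{n-1}(\Pi_i)\ge V_{n-1}(K_i)$, where $K_i$ is the pyramid over the base $B_i=K'\cap\T_i$ with apex the point of $\Pi_i$ nearest to $O$, and compare $K_i$ with the pyramid $K_i''$ over the same base with apex $O$ via the ratio of heights $h_i/d_i$, which tends to $1$ because $r_i\to 0$ while $d_i$ stays bounded below; since $V_{n-1}(K_i'')\ge m$ by the definition of $m$, the bound $m_A\ge m$ follows. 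You would need to supply an argument of this kind, or a genuinely uniform version of your graph convergence, for the proof to be complete.
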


\begin{proof} For any $A\in\partial K\setminus\{O\}$ there is a maximal positive integer
$k\le n-1$ and a unique $k$-plane $\T\ni O$ such that $A$ is a
relatively inner point for the cone $K'=\partial K\cap\T.$ Let
$\T'\subset\T$ be a $(k-1)$-plane through $A$ that minimize the
$(k-1)$-volume of the cone segment $K''$ of $K'$ cut off by $\T'.$
Take $n-k$ extreme rays of $K$ not lying in $\T'$ and points on
them which approach $O.$ These points together with $\T'$
determine admissible hyperplanes $\H_i$ for $K$ and it is not
difficult to see that $V_{n-1}(K\cap\H_i)\to V_{n-1}(K'').$ Hence
$m_A=0$ for $k<n-1$ and $m\ge m_A$ for $k=n-1.$

Let $k=n-1.$ It turns out that $A$ is the centroid of $K'\cap\T'$ \cite[Section (iv)]{We}).
Then $\overrightarrow{AO}=n\overrightarrow{AG},$ where $G$ is the centroid of $K'',$
i.e. \eqref{main} holds ($H=O$).

It remains to show that if $k=n-1$ and $m_A$ is not attained, then $m_A\ge m.$ For this,
consider a sequence of admissible hyperplanes $\H_i\ni A$ such that $V_{n-1}(K\cap\T_i)\to m_A.$
\smallskip

\noindent{\it Claim.} $\T_i\to\T.$
\smallskip

\noindent{\it Proof of the claim.} Let
$\T_i=\T_i(b_i)=\{x:(b_i,x)=1\}.$ Since $m_A$ is not attained,
then $||b_i||\to\infty.$ On the other hand, $A\in (K')^o$ and
therefore there are  $n-1$ linearly independent vectors $e_1,
e_2,\dots, e_{n-1}\in K'$ such that $a=\sum_{k=1}^{n-1}\alpha_k
e_k,$ where $\alpha_k>0.$ Let $c_i=\frac{b_i}{||b_i||}.$ Since
$(c_i,e_k)>0$ and $(c_i,a)=\frac{1}{||b_i||}\to 0,$ it follows
that $(c_i,e_k)\to 0$ for any $1\le k\le n-1.$ This means that
$c_i$ tends to a normal vector of $\T,$ i.e. $\T_i\to\T.$\qed
\smallskip

Further, denote by $\Pi_i$ the orthogonal projection of $K\cap\T_i$ on $\T.$
Consider a point $O_i\in\Pi_i$ at minimal distance $r_i$ from $O.$ Let $K_i$
be the convex hull of $O_i$ and $B_i=K'\cap\T_i.$ Then
\begin{equation}\label{m1}
V_{n-1}(K\cap\T_i)\ge V_{n-1}(\Pi_i)\ge V_{n-1}(K_i)
\end{equation}
Set $h_i=\mbox{dist}(O_i,B_i),$ $d_i=\mbox{dist}(O,B_i),$ and note that
\begin{equation}\label{m2}\frac{V_{n-1}(K_i)}{V_{n-1}(K''_i)}=\frac{h_i}{d_i},
\end{equation}
where $K''_i$ is the cone segment of $K'$ cut off by the
$(n-2)$-plane $\T_i\cap\T .$ Now a similar argument to that in the
proof of the claim provides by contradiction a $d>0$ such that
$d_i\ge d.$\footnote{In fact, such an argument shows that if $A$
is an interior point of a closed convex pointed cone $L,$ then
there exists a $d_A>0$ such that $\mbox{dist}(O,\mathcal H)>0$ for
any admissible hyperplane $\H\ni A.$} On the other hand, $\T_i\to
\T$ easily implies that $r_i\to 0$ and hence
\begin{equation}\label{m3}
\left|\frac{h_i}{d_i}-1\right|\le\frac{r_i}{d_i}\to 0.
\end{equation}
Since $V_{n-1}(K\cap\T_i)\to m_A,$ then \eqref{m1}, \eqref{m2}, \eqref{m3}, and
$V_{n-1}(K''_i)\ge m$ imply that $m_A\ge m.$
\end{proof}

\section{The non-negative orthant}

Let $K$ be the non-negative orthant in $\R^n,$ i.e.
$K=(\R^+_0)^n.$ We already know that $S=K^o.$ For
$A=(a_1,a_2,\dots,a_n)\in K^o,$ set $b_i=\frac{n-1}{2}a_i,$ $1\le
i\le n,$ and
$$f_A(x)=\sum_{i=1}^n\frac{b_i}{b_i+\sqrt{b_i^2+x}}-\frac{n-1}{2}.$$
This function is continuous and strictly decreasing on the interval $[-\min b_i^2,+\infty),$
$f(0)=\frac{1}{2}$ and $f(+\infty)=\frac{1-n}{2}.$ Hence the equation $f_A(x)=0$
has a unique real root $\l_A,$ and $\l_A>0.$ Set $c_i=b_i+\sqrt{b_i^2+\l_A},$
$1\le i\le n.$

\begin{prop}\label{df} For any $A\in K^o$ there exists a unique stationary hyperplane.
For the respective $(n-1)$-simplex $\mathcal A$ we have that
$\overrightarrow{OA_i}=c_ie_i,$ $1\le i\le n,$ and
$$V_{n-1}(\mathcal A)=\frac{\prod_{i=1}^nc_i}{(n-1)!\sqrt{\l_A}}.$$
\end{prop}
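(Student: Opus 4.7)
The plan is to parameterize admissible hyperplanes by their positive axis intercepts and translate the stationarity condition \eqref{main} into an algebraic equation on those intercepts. Since $K=(\R^+_0)^n$ is self-dual we have $K^+=K^o,$ so every admissible hyperplane meets each coordinate axis in a point with positive coordinate; hence $\H$ has a unique expression $\sum_{i=1}^n x_i/c_i=1$ with $c_i>0,$ making $\overrightarrow{OA_i}=c_ie_i$ and reducing the incidence $A\in\H$ to $\sum_i a_i/c_i=1.$

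The key step is to recast \eqref{main} in these coordinates. The centroid of $\mathcal A$ is $G=\frac{1}{n}(c_1,\dots,c_n),$ while the foot of the perpendicular from $O$ to $\H$ is the positive multiple of the normal direction $(1/c_1,\dots,1/c_n)$ lying on $\H,$ yielding $H_i=\mu/c_i$ with $\mu=1/\sum_j c_j^{-2}$ and $\|H\|=\sqrt{\mu}.$ Writing \eqref{main} as $H=nG-(n-1)A$ and using $a_i=2b_i/(n-1)$ I would read off $c_i(c_i-2b_i)=\mu$ for each $i.$ Calling the common value $\l,$ the positivity $H_i=c_i-2b_i>0$ forces $\l>0$ and selects the positive root $c_i=b_i+\sqrt{b_i^2+\l}$ of $x^2-2b_ix-\l=0.$ Substituting back into $\sum_i a_i/c_i=1$ gives exactly $f_A(\l)=0,$ so by the strict monotonicity of $f_A$ recorded before the proposition, $\l=\l_A.$ This determines the $c_i$ uniquely and, reversing the chain, also verifies that the hyperplane so obtained is stationary (existence being in any case guaranteed by Proposition~\ref{geom2}(i)).

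The volume then follows by computing the $n$-volume of the pyramid $OA_1\dots A_n$ in two ways: directly as $\frac{1}{n!}\prod_i c_i$ (its edges from $O$ are mutually orthogonal, of lengths $c_i$), and as $\frac{1}{n}\|H\|V_{n-1}(\mathcal A);$ solving for $V_{n-1}(\mathcal A)$ with $\|H\|=\sqrt{\l_A}$ gives the stated formula. There is no serious obstacle in the argument; the only bookkeeping demanding attention is the self-consistency of the renaming $\mu=\l,$ which one verifies by dividing $c_i^2-2b_ic_i=\l$ by $c_i^2,$ summing over $i,$ and invoking $f_A(\l)=0$ to obtain $\l\sum_i c_i^{-2}=1.$ The proof essentially reduces the geometric stationarity condition to the univariate irrational equation $f_A(\l)=0$ whose solution theory is exactly what the statement of the proposition sets up.
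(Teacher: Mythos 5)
Your proof is correct and arrives at exactly the same reduction as the paper: the per-coordinate quadratic $c_i^2-2b_ic_i=\l$ with $\l=d^2$ the squared distance from $O$ to $\H$, the selection of the root $c_i=b_i+\sqrt{b_i^2+\l}$, the substitution into $\sum_i a_i/c_i=1$ to obtain $f_A(\l)=0$, and the two-ways pyramid volume computation. The only real difference is the entry point: the paper invokes Remark (ii) after Proposition \ref{geom1} (the orthant's simplex $OA_1\dots A_n$ is orthocentric, so stationarity is equivalent to the equidistance condition $A'A_1=\dots=A'A_n$), and extracts the quadratic from $A'A_i^2-A'O^2=\l$; you instead evaluate \eqref{main} component-wise as $H=nG-(n-1)A$ using the explicit formulas $G=\frac1n(c_1,\dots,c_n)$ and $H_i=\mu/c_i$. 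Your route is slightly more self-contained (it does not need the orthocentric-simplex machinery of Section 2), while the paper's route reuses the geometric characterization it has already built; the identification $\l=\mu=d^2$, which you verify by dividing by $c_i^2$ and summing, is obtained by the identical summation trick in the paper. Both arguments are complete, including the uniqueness via strict monotonicity of $f_A$ and the reversibility giving existence.
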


\begin{proof} Let $\H$ be an admissible hyperplane. Then, by Remark (ii) after
Proposition \ref{geom1}, $\H$ is a stationary hyperplane for $A$
if and only if $A'A_1=A'A_2=\dots=A'A_n,$ where $A'$ is given by
\eqref{newp}. Let $\overrightarrow{OA_i}=x_ie_i,$ $1\le i\le
n.$ Then we get that
\begin{equation}\label{f1}
x_i^2-(n-1)x_ia_i=A'A_i^2-A'O^2=:\l,\quad 1\le i\le n,
\end{equation}
i.e.
$$1-(n-1)\frac{a_i}{x_i}=\frac{\l}{x_i^2}.$$
Summing up these equalities and using that
\begin{equation}\label{f2}
\sum_{i=1}^{n}\frac{a_i}{x_i}=1
\end{equation}
(since $A, A_1, A_2,\dots, A_n\in\H$), we get that
$\l\sum_{i=1}^nx_i^{-2}=1,$ i.e. $\l=d^2,$ where $d$ is the
distance from $O$ to $\H.$ By \eqref{f1},
$x_i=b_i+\sqrt{b_i^2+\l}$ and then \eqref{f2} implies that
$f_A(\l)=0.$ So $\l=\l_A,$ $x_i=c_i,$ and the formula for
$V_{n-1}(\mathcal A)$ follows.
\end{proof}

Let us note that the circumcenter $C$ of the simplex $\mathcal A$
coincides with $A$ if only if $n=3.$
\smallskip

Applying Proposition \ref{df} for $n=2$ gives the following known fact.

\begin{cor}\label{plane} The length of a shortest segment cut from the first quadrant in
$\R^2$ by a line through its interior point $A(a_1,a_2)$ is equal to
$(a_1^{2/3}+a_2^{2/3})^{3/2}$ and such a segment is unique.
\end{cor}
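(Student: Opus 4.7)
The plan is to apply Proposition \ref{df} with $n=2$. For the first quadrant the angle between the extreme rays is $90^\circ$, so Proposition \ref{geom4}(i) guarantees that the unique stationary line provided by Proposition \ref{df} is in fact the global minimizer; in particular, both the existence and the uniqueness of a shortest cut-off segment follow for free. Hence the only remaining task is to evaluate its length in closed form.

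First I would specialize the data: with $b_i = a_i/2$, equations \eqref{f1} and \eqref{f2} read
\begin{equation*}
c_i^2 - a_i c_i = \lambda_A \quad (i = 1, 2), \qquad \frac{a_1}{c_1} + \frac{a_2}{c_2} = 1,
\end{equation*}
and the shortest segment is $A_1 A_2$ with $A_1 = (c_1, 0)$ and $A_2 = (0, c_2)$, of length $\sqrt{c_1^2 + c_2^2}$. (This matches the formula $V_1(\mathcal A) = c_1 c_2/\sqrt{\lambda_A}$ from Proposition \ref{df}, since $\sqrt{\lambda_A}$ is the distance from $O$ to the line $A_1A_2$.)

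The only non-routine step is a symmetric Ansatz: try $c_i = t\, a_i^{1/3}$. The passage condition immediately forces $t = a_1^{2/3} + a_2^{2/3}$, after which
\begin{equation*}
c_i^2 - a_i c_i = t\, a_i^{2/3}\bigl(t - a_i^{2/3}\bigr) = t\,(a_1 a_2)^{2/3}
\end{equation*}
is independent of $i$, so \eqref{f1} holds with $\lambda_A = t(a_1 a_2)^{2/3} > 0$. By the uniqueness statement in Proposition \ref{df}, this must be the stationary configuration. Finally $c_1^2 + c_2^2 = t^2(a_1^{2/3} + a_2^{2/3}) = t^3$, so the minimal length equals $t^{3/2} = (a_1^{2/3} + a_2^{2/3})^{3/2}$, as claimed. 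There is no genuine obstacle in the argument; the only ingenuity required is spotting the self-similar Ansatz, after which everything collapses to two lines of algebra.
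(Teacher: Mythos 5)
Your proposal is correct and follows essentially the same route as the paper: both arguments rest on Proposition \ref{df} and a guess-and-verify computation of $\l_A$ (the paper guesses $\l=(a_1a_2)^{2/3}(a_1^{2/3}+a_2^{2/3})$ directly, while you reach the same value via the Ansatz $c_i=t\,a_i^{1/3}$, which gives the identical $c_i=a_i^{1/3}(a_1^{2/3}+a_2^{2/3})$). The only cosmetic difference is your explicit appeal to Proposition \ref{geom4}(i) to confirm the stationary line is the global minimizer, which the paper leaves implicit.
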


\begin{proof} Let $\l=(a_1a_2)^{2/3}(a_1^{2/3}+a_2^{2/3}).$ Since
$$b_1^2+\l=a_1^{2/3}(a_1^{2/3}/2+a_2^{2/3})^2,\quad
b_2^2+\l=a_2^{2/3}(a_2^{2/3}/2+a_1^{2/3})^2,$$
it easily follows that $\l=\l_A.$ Then
$c_1=a_1+a_1^{1/3}a_2^{2/3},$ $c_2=a_2+a_2^{1/3}a_1^{2/3},$ and hence
$V_1=(c_1^2+c_2^2)^{1/2}=(a_1^{2/3}+a_2^{2/3})^{3/2}.$
\end{proof}

Using the above corollary, we will generalize it in higher dimensions.

\begin{prop}\label{dfl} Let $A=A(a_1,a_2,\dots,a_n)$ with $0<a_1\le a_2\le\dots\le a_n.$
Then the length of a shortest segment cut from the non-negative orthant $K$ in $\R^n$
by a line through $A$ is equal to $(a_1^{2/3}+a_2^{2/3})^{3/2}.$
\end{prop}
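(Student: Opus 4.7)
The plan is to reduce the $n$-dimensional question to Corollary \ref{plane} by an orthogonal projection onto a suitable coordinate $2$-plane. Any segment cut from $K=(\R^+_0)^n$ by a line through $A$ has its two endpoints $P,Q$ on $\partial K$; thus there are indices $i\neq j$ (they must differ, otherwise the whole line would lie in a single coordinate hyperplane and miss $A$) such that the $i$-th coordinate of $P$ vanishes and the $j$-th coordinate of $Q$ vanishes. Denote by $\pi_{ij}:\R^n\to\R^2$ the orthogonal projection onto the $(x_i,x_j)$-plane. Then $\pi_{ij}(A)=(a_i,a_j)$ lies in the open first quadrant, and $\pi_{ij}([PQ])$ is a segment in the closed first quadrant that passes through $(a_i,a_j)$ and has its endpoints on the two coordinate axes.

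Since orthogonal projection is non-expanding, $|PQ|\ge|\pi_{ij}(P)\pi_{ij}(Q)|$. By Corollary \ref{plane} applied in the $(x_i,x_j)$-plane,
$$|\pi_{ij}(P)\pi_{ij}(Q)|\ge\bigl(a_i^{2/3}+a_j^{2/3}\bigr)^{3/2}.$$
The monotonicity assumption $a_1\le a_2\le\dots\le a_n$ gives $a_i^{2/3}+a_j^{2/3}\ge a_1^{2/3}+a_2^{2/3}$ for any pair $i\neq j$ (indeed, if $\{i,j\}\neq\{1,2\}$, the smaller of $a_i,a_j$ is still $\ge a_1$ and the larger is $\ge a_2$). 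Combining these two estimates yields the lower bound $|PQ|\ge(a_1^{2/3}+a_2^{2/3})^{3/2}$ for every admissible segment.

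It remains to exhibit a segment achieving equality. By Corollary \ref{plane}, there is a line $\ell_0$ in $\R^2$ through $(a_1,a_2)$ cutting off the first quadrant a segment $[P_0Q_0]$ of length $(a_1^{2/3}+a_2^{2/3})^{3/2}$, with $P_0=(0,p_2)$, $Q_0=(q_1,0)$, $p_2,q_1>0$. Lift this configuration to $\R^n$ by setting all remaining coordinates equal to $a_3,\dots,a_n$: define
$$P=(0,p_2,a_3,\dots,a_n),\qquad Q=(q_1,0,a_3,\dots,a_n).$$
Since $a_3,\dots,a_n>0$, both points lie in $\partial K$, and the line they span passes through $A$ and is parallel to the $(x_1,x_2)$-plane; hence the projection $\pi_{12}$ restricted to $[PQ]$ is an isometry, and $|PQ|=|P_0Q_0|=(a_1^{2/3}+a_2^{2/3})^{3/2}$.

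The only subtlety is the choice of the ``correct'' coordinate $2$-plane. This is handled purely by the monotonicity of $a_i^{2/3}+a_j^{2/3}$ in each variable, which singles out the pair $(1,2)$, and by the fact that the attaining line can be taken parallel to that coordinate $2$-plane precisely because all other coordinates of $A$ are strictly positive.
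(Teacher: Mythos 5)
Your proof is correct and follows essentially the same route as the paper: both arguments reduce to Corollary \ref{plane} by orthogonally projecting a candidate segment onto a coordinate $2$-plane chosen according to which facets of $K$ contain its endpoints (the projection being non-expanding), and then use the monotonicity of $a_i^{2/3}+a_j^{2/3}$ to single out the pair $(1,2)$. Your explicit lifted segment for attainment corresponds to the paper's observation that the extremal segment for $(\R_0^+)^2\times\R^{n-2}$ already lies in $K$.
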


\begin{proof} We may assume that $n\ge 3.$
We will first prove that the length of a shortest segment cut from
the set $Q=(\R_0^+)^2\times\R^{n-2}\supset K$ by a line through
$A$ is given by the above formula, such a segment is unique and it
lies in $K.$ To do this, set $L_1=\R_0^+\times\R^{n-1}$ and
$L_2=\R\times\R_0^+\times\R^{n-2}.$ Let a line through $A$ cuts
from $Q$ a segment $[A_1A_2],$ where  $A_1\in L_1$ and $A_2\in
L_2$. Denote by $\Pi$ the plane through $A$ which is orthogonal to
$\{0\}\times\{0\}\times\R^{n-2}.$ Let $A_1'$ and $A_2'$ be the
orthogonal projections of $A_1$ and $A_2$ on $\Pi.$ Then $A_1'\in
L_1,$ $A_2'\in L_2,$ $O\in [A_1'A_2'],$ and
$|A_1'A_2'|\leq|A_1A_2|$ with equality if and only if $A'_1=A_1$
and $A_2'=A_2.$ Hence the problem is reduced to that for the right
angle $Ol_1l_2\subset K,$ where $l_1=L_1\cap\Pi$ and
$l_2=L_2\cap\Pi.$ It remains to apply Corollary \ref{plane}.

Let now $[BC]$ be a segment cut from $K$ by a line through $A$. Setting
$L_k= \R^{k-1}\times\R_0^+\times\R_0^{n-k},$ $1\le k\le n,$ we may choose $i\neq j$
such that $B\in L_i$ and $C\in L_j.$ Then the same reasoning as above
shows that $$\mid BC\mid\geq
(a_i^\frac{1}{3}+a_j^\frac{1}{3})^\frac{3}{2}\geq
(a_1^\frac{1}{3}+a_2^\frac{1}{3})^\frac{3}{2}$$ and the
proposition is proved.
\end{proof}

This proof shows that for $n\ge 3$ the number of lines
through $A$ which cut from $K$ segments of minimal length is equal
to $k-1$ if $a_1<a_2=a_k<a_{k+1},$ and to $\binom{k}{2}$ if
$a_1=a_2=a_k<a_{k+1}.$ In particular, such a line is unique
if and only if $a_2<a_3.$

On the other hand, it is clear that among all lines $l\not\ni O$
through $A\not\in K^o$ there is no one which cuts from $K$ a
segment of minimal length.

\end{document}